\newtheorem{lemma}{ Lemma}
\newtheorem{theorem}{ Theorem}
\newtheorem{definition}{ Definition}
\def\endproof{\hfill \vrule height8pt width4pt}
\begin{document}

\begin{center}
\textsl{A.A. Akimova, S.V. Matveev}\footnote{Both authors are
supported by RFBR, project no. 12-01-00748,  Scientific School
      Grant no. 1414.2012.1, and   the joint
      research project 12-C-1-1018-1 of Ural and Siberian branches of RAS}
       \\[5pt]

\textsc{\textbf{ Classification of   knots in $T\times I$ with at
most 4 crossings
 }}\\[15pt]
\end{center}

We compose the  table of knots in the  thickened torus $ T\times I
$   having  diagrams with $\leq 4$ crossings. The knots are
constructed by the three-step  process. First we list regular
graphs of degree 4 with $\leq 4$ vertices, then for each graph we
enumerate all corresponding knot projections, and after that we
construct the corresponding minimal diagrams.  Several  known and
new tricks made it possible to keep  the process  within
reasonable limits and offer a rigorous theoretical proof of the
completeness of the table. For proving that all knots are
different we use a
generalized version of the Kauffman polynomial.\\

\textit{Key words:}   knot, thickened torus,  knot table.\\[15pt]

\begin{center}\textbf{
Introduction}\\[5pt]
\end{center}

The interest in knots in manifolds of type  $F \times I$, where
$F$ is closed orientable surface, has increased in recent years.
The torus $T=S^1\times S^1$  is the most simple closed orientable
surface after  $S^2$. So the theory of  knots in $T \times I$ is a
natural  generalization of the theory of knots in $S^{2}\times I$,
which is equivalent to the    theory of knots in $S^3$. Knots in
$T\times I$ can be represented by diagrams similar to spherical
diagrams of classical knots. The Reidemeister moves play the same
role: they implement knot isotopies.

First tables of knots had been composed by P. Tait  in 1876 [1].
Then these tables had been   enlarged ([2, 3]). Now there exist
tables of knots in $S^{3}$ having diagrams with $\leq 16$ and even
$\leq 22$ crossings [4, 5]. On the other hand, there are only a
few papers on tabulation of knots in thickened surfaces, see [6,7]
for knots in $RP^2\tilde{\times}I $, which is   the punctured
projective space. An efficient method for tabulating tangles is
described in  [8]. Links in the thickened torus had been studied
in [9, 10]. See also [11].

This paper is devoted to tabulating knots in the thickened torus
$T \times I$ having diagrams with $\leq 4$ crossings. The knots
are constructed by the three-step  process. First we list regular
graphs of degree 4 with $\leq 4$ vertices, then for each graph we
enumerate all corresponding knot projections, and after that we
construct the corresponding minimal diagrams.  Several  known and
new tricks made it possible to keep the process  within reasonable
limits and offer a rigorous theoretical proof of the completeness
of the table. For proving that all knots are different we use a
generalized version of the Kauffman polynomial
[12], see also [13].\\

\textbf{
\center{\S 1. The main result\\[5pt]}}

\begin{definition}Let $T=S^1
\times  S^1$  be the two-dimensional torus and let $I$ be the
interval $[0, 1]$. A knot in $T\times I$ is an arbitrary simple
closed curve $K \subset T\times I$. Two knots $ K, K' \subset T
\times I $ are
 equivalent if the pairs $ (T \times  I, K)$,
 $(T
  \times  I, K')$ are homeomorphic.
  \end{definition}

Knots in $T\times I$, as well as   classical knots, can be
represented by projections and diagrams. By a projection of a knot
in $  T\times I$ we mean a regular graph $G\subset T$ of degree 4
such that the   ``straight ahead'' rule determines a cycle
composed of all the edges of $G$. This cycle can be converted into
a knot diagram by breaking it in each crossing point to show which
strand is going over the other.
  Two projections $G, G'$ are called equivalent if
   the pairs $ (T ,G)$,
 $(T,
    G')$ are homeomorphic.  The diagram
equivalence has the same meaning. In addition we allow
simultaneous crossing change at all crossings.

\begin{definition}
A diagram  of a knot  $K\subset T\times I $ is called minimal if
its complexity (the number of  crossings) is no more than the
complexity of any diagram of any knot   equivalent to $K$. A
projection $G\subset T$ is   minimal if at least one of the
corresponding knot diagrams   is minimal.
  \end{definition}

   We   say that a knot $K\subset T\times I$
   is local if it is contained in a   ball $V\subset  T\times I$
  and   composite, if there is a ball $V\subset  T\times I$ such that
  $\partial V$ decomposes $K$ into two nontrivial arcs in $V$ and the complement of $V$.
         Our table    consists of
     knots which are     prime, that is, neither local nor composite.

\begin{theorem}  There exist exactly 64 different  prime  knots in $T\times
I$ having $\leq 4$ crossings. Diagrams of those knots are
shown in Fig.~\ref{fig1}.
\end{theorem}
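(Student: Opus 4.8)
The plan is to carry out the three-step enumeration announced in the introduction and then to separate the surviving knots by an invariant. The logical backbone is (i) a completeness argument that lists all prime knots via a controlled enumeration of projections, and (ii) a distinctness argument that computes the generalized Kauffman polynomial on the resulting candidates.

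First I would enumerate the underlying shadows. A knot projection with at most $4$ crossings is a $4$-regular graph $G\subset T$ with at most $4$ vertices, subject to the straight-ahead condition that forces a single cyclic traversal of all edges (so that $G$ is a projection of a knot, not a link). The abstract list of $4$-regular graphs on $0,1,2,3,4$ vertices is short and classical, so this first combinatorial step is finite. The genuinely topological work begins when embedding each abstract graph into the torus: by Definition~2 two projections are equivalent only when the pairs $(T,G),(T,G')$ are homeomorphic, so I would quotient the embeddings by the action of the mapping class group of $T$ (isomorphic to $GL(2,\mathbb Z)$). This collapses most of the apparent cases and is where the bulk of the torus-specific bookkeeping lives.

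Second, for each minimal projection I would assign over/under data at the crossings. The permission of a simultaneous crossing change at all crossings, together with the Reidemeister moves, sharply reduces the count of inequivalent diagrams. At this stage I would discard every diagram that is non-minimal, local (contained in a ball), or composite, keeping only the prime ones. The ``known and new tricks'' mentioned in the introduction are exactly the devices that certify, at each stage, both that no case is overlooked and that each reduction respects the equivalence relation on knots in $T\times I$. Third, to prove the $64$ survivors are pairwise inequivalent, I would evaluate the generalized Kauffman polynomial of each. Since $\pi_1(T\times I)\cong\mathbb Z^2$ is nontrivial, essential loops contribute variables indexed by their homology class rather than the single scalar factor of the classical bracket; the resulting refined polynomials distinguish the diagrams.

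The main obstacle is completeness, not distinctness. Distinctness is essentially mechanical once the invariant is fixed: one computes and compares polynomials. The delicate part is guaranteeing that the enumeration of embedded projections up to homeomorphism of $T$, followed by crossing assignment modulo Reidemeister moves and the simultaneous crossing change, yields \emph{precisely} the equivalence classes of prime knots, with no omissions and no hidden coincidences. In particular, certifying that each retained diagram is genuinely minimal---that it admits no simplification by a sequence of moves on the torus, including ones that temporarily raise the crossing number---is the step demanding the most care and a rigorous argument rather than mere inspection. This is where I expect the proof to be hardest.
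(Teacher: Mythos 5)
Your outline matches the paper's strategy step for step: abstract $4$-regular graphs, then embedded projections up to homeomorphism of $T$, then crossing assignments with duplicate removal, then the generalized Kauffman polynomial for distinctness. But as written it is a plan rather than a proof: every substantive step is deferred. The paper's enumeration rests on concrete devices you do not supply --- a lemma that every regular degree-$4$ graph on at most $4$ vertices contains a loop or a multiple edge (which generates all $15$ abstract graphs from the circle by two elementary operations), a case-by-case construction of the $36$ prime projections by cutting at vertices and analyzing how the resulting circles sit in $T$ (trivial versus nontrivial, parallel nontrivial circles, biangle addition and removal), a proof that the $36$ projections are pairwise distinct (counting angles of faces), and four explicit reduction rules for crossing assignments (simultaneous crossing change, biangle faces admitting only two irreducible choices, a forced local fragment, and the third Reidemeister move). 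Saying that the mapping class group $GL(2,\mathbb Z)$ ``collapses most of the apparent cases'' names the group acting but not the argument; the actual work is showing, for each graph, that the complement of a partial configuration in $T$ is a disk or an annulus, which pins down the remaining edges.

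The more pointed gap is your treatment of minimality. You single out as the hardest step the certification that each retained diagram admits no simplification, ``including moves that temporarily raise the crossing number,'' and you expect a delicate direct argument there. The paper sidesteps this entirely by a bootstrapping argument: because the enumeration is complete at every crossing number $n\le 4$, any knot with a diagram of $k<n$ crossings already appears in the table with $k$ crossings; since all $64$ polynomials are pairwise distinct, no $n$-crossing entry can be equivalent to a $k$-crossing entry, hence every listed diagram (and every one of the $36$ projections) is automatically minimal. Completeness plus distinctness yields minimality for free --- no analysis of crossing-increasing move sequences is needed. Separately, note that your proposed invariant, with variables ``indexed by homology class'' of essential loops, is finer than the paper's, which only records the \emph{number} of nontrivial circles in each state via a single variable $x$; the coarser version already suffices and is easier to verify as an invariant.
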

\begin{figure}
\centerline{\psfig{figure=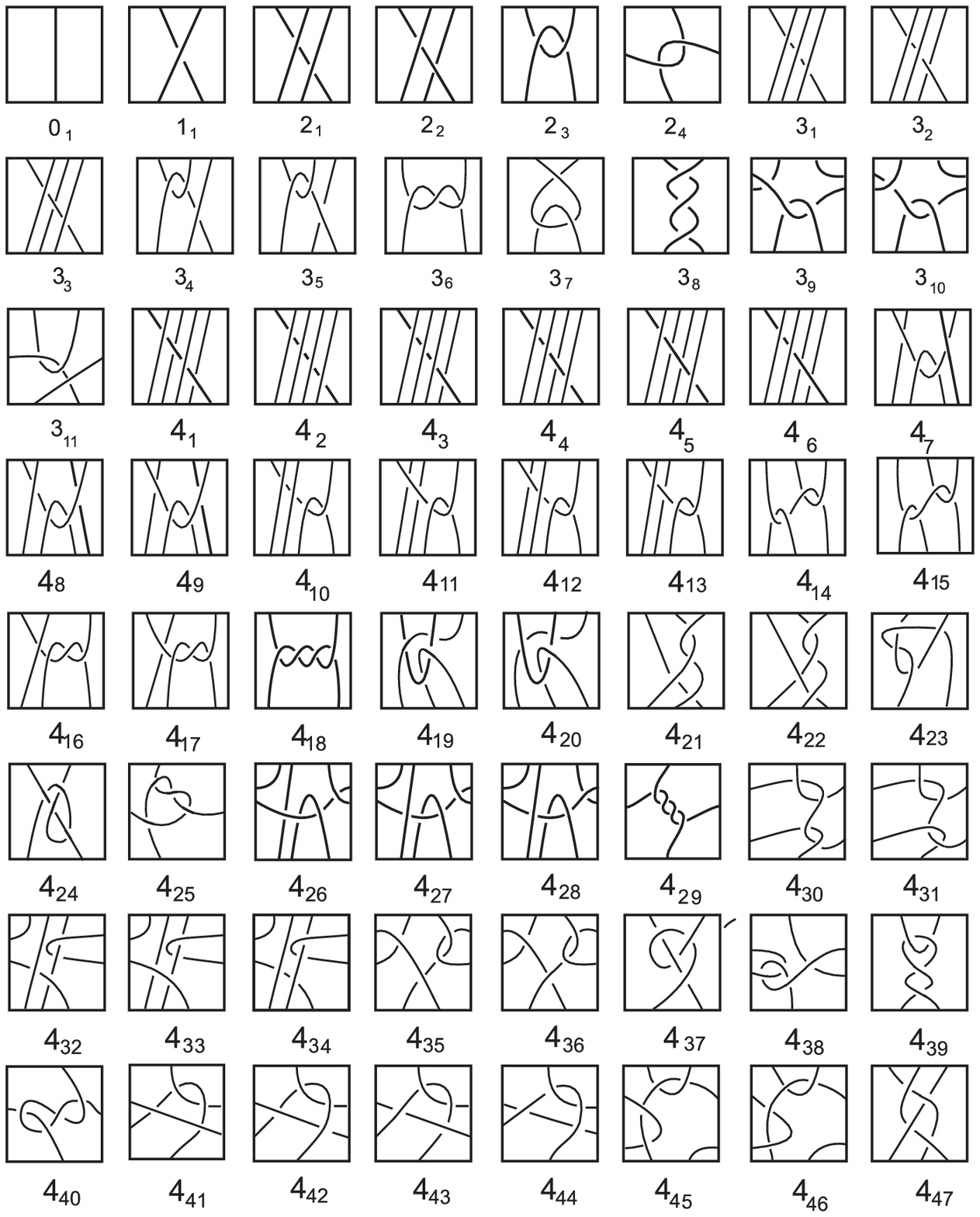,height=18cm}}
  \caption{  }
  \label{fig1}
\end{figure}

The proof of Theorem 1 consists of three steps. First  we list all
abstract regular graphs with $\leq 4$ vertices and classify all
prime  projections in $T$. Then we enumerate all corresponding
diagrams. By performing  this step we use different tricks for
removing duplicates (i.e. diagrams representing equivalent knots).
At the last step we use a generalized version of the Kauffman
polynomial  for proving that all
  knots thus obtained are different.

\textbf{
\center{\S 2. The enumeration of graphs and projections \\[5pt]}}

\begin{lemma}  Any regular graph $G$ with $n\leq 4$
vertices contains   a    loop or a multiple edge.
\end{lemma}
\begin{proof} To the contrary, suppose that $G$ contains  no loops and multiple edges.
Denote by  $N$ the number
 of edges of   $G$. Then $N$ is at most $C_n^2=n(n-1)/2$.
 On the other hand, we have the equality $N=2n$, because $G$ is regular.
    This contradicts the assumption    $n\leq 4$.
 \end{proof}

Note that prime projections have no  trivial loops, since any
trivial loop in a knot diagram can be   removed by   the first
Reidemeister move. Moreover,  any knot projection cannot  contain
 more than two nontrivial loops (otherwise the projection would
be disconnected).

\begin{lemma}There exist exactly 15 regular graphs with $\leq 4$ vertices and $\leq 2$
 loops, including the circle without vertices. See Fig.~\ref{fig2}.\\
\end{lemma}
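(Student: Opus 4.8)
The plan is to regard each graph as a connected $4$-valent multigraph (loops and multiple edges allowed), since a knot projection must be connected: the straight-ahead rule has to trace a single cycle through all of the edges. I would organize the whole count by the number of vertices $n$, which by hypothesis runs over $0,1,2,3,4$, and within each $n$ by the number of loops (at most $2$). Throughout I would use the degree bookkeeping already exploited in Lemma 1: a loop contributes $2$ to the degree of its vertex, so if $\ell_i$ denotes the number of loops at vertex $i$ and $m_{ij}$ the number of edges joining distinct vertices $i,j$, then $2\ell_i+\sum_{j\neq i}m_{ij}=4$ for every $i$, and the total number of edges equals $N=2n$.

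The small cases are quick. For $n=0$ the only graph is the circle with no vertices. For $n=1$ every edge is a loop, so the single vertex must carry exactly two loops. For $n=2$ the two equations $2\ell_1+m_{12}=4$ and $2\ell_2+m_{12}=4$ force $\ell_1=\ell_2$, and with at most two loops in total this leaves $(\ell_1,\ell_2,m_{12})=(0,0,4)$ or $(1,1,2)$, giving two graphs. For $n=3$ I would split on the number of loops: with no loops the three degree equations force every $m_{ij}=2$; with one loop one finds the unique solution carrying a triple edge; with two loops connectivity forces them onto distinct vertices, yielding one more graph. This produces $1,1,2,3$ graphs for $n=0,1,2,3$.

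The heart of the proof is $n=4$, and I expect this to be the main obstacle. Here the four degree equations in the six multiplicities $m_{ij}$ (plus any loops) must be solved completely and the solutions sorted up to relabelling of the vertices, with disconnected ones discarded. Again I would split on the number of loops. With no loops I would classify by the maximum edge multiplicity: multiplicity $4$ forces two disjoint double-bonded pairs and is disconnected; multiplicity $3$ gives exactly one graph (a pair of triple edges joined by two single edges); multiplicity $2$ gives exactly two graphs, namely $K_4$ with one perfect matching doubled and the $4$-cycle with every edge doubled; multiplicity $1$ is impossible, since a simple $4$-regular graph needs at least five vertices. This yields three loopless graphs. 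With exactly one loop, say at vertex $1$, the relation $m_{12}+m_{13}+m_{14}=2$ together with the remaining equations pins down the other multiplicities, and up to symmetry there are exactly two solutions. With two loops, connectivity forces them onto two distinct vertices; writing $e=m_{12}$ for the edge between the loop-bearing vertices one gets $m_{34}=2+e$ and a short one-parameter family whose members I would reduce modulo the evident $3\leftrightarrow4$ symmetry, leaving exactly three connected graphs. Altogether $n=4$ contributes $3+2+3=8$.

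Summing the contributions gives $1+1+2+3+8=15$, including the vertex-free circle, as claimed. The delicate points, and where I expect the real work to lie, are entirely inside the $n=4$ analysis: proving that the case split on maximum multiplicity and on loop placement is exhaustive, and checking that the surviving solutions are pairwise non-isomorphic and connected, so that none of the $15$ graphs is counted twice and none is spurious.
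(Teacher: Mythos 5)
Your count is correct and your case analysis checks out (I verified the $n=4$ subcases: three loopless graphs, two with one loop, three with two loops after discarding the disconnected solutions $e=2$ and the double-loop-on-one-vertex configurations), but your route is genuinely different from the paper's. The paper proves the lemma generatively: it invokes Lemma 1 to argue that every regular graph with $n\leq 4$ vertices is obtained from the vertex-free circle by $n$ applications of two local operations (inserting a loop, or identifying a point of a loop with a point of another edge), and then enumerates all graphs reachable in at most four steps, discarding duplicates and graphs with three or more loops. You instead solve the degree equations $2\ell_i+\sum_{j\neq i}m_{ij}=4$ directly and classify the multiplicity matrices up to vertex relabelling. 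Your method is more self-contained and easier to certify as exhaustive --- completeness is a consequence of solving a finite linear system rather than of an argument that the two generating operations suffice (which in the paper rests on Lemma 1 plus an implicit claim about the inverse operations), and non-isomorphism is read off from explicit multiplicity data. The paper's method is shorter to state and produces each graph together with a construction history that is reused later when the projections are built circle by circle. One small point worth making explicit in your write-up: the lemma as stated does not say ``connected,'' so you should note (as you do) that connectivity is forced by the intended application to knot projections; the paper gets connectivity for free since its operations preserve it. With that caveat both arguments yield the same list of $1+1+2+3+8=15$ graphs.
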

\begin{figure}[htb]
\centerline{\psfig{figure=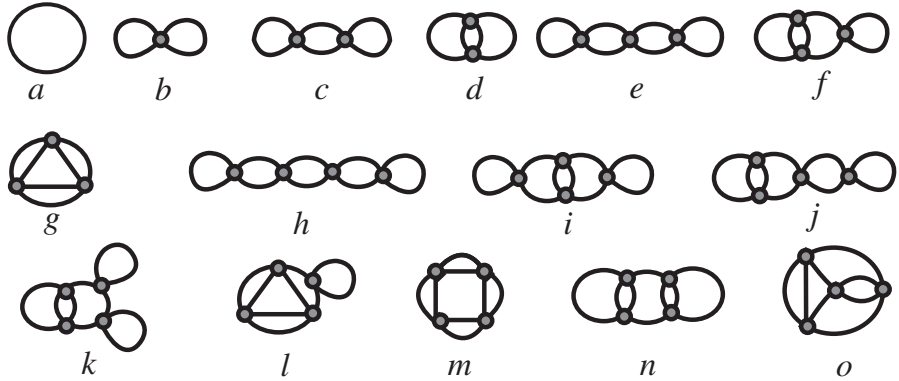,height=5cm}}
  \caption{Regular graphs with $\leq 4$ vertices and $\leq 2$ loops }
  \label{fig2}
 \end{figure}
  \begin{proof} It follows from  Lemma 1 that all regular graphs with $n\leq 4$
  vertices can be obtained from the circle by   $n$ operations of the following   two
  types: 1) insertion of a loop,  and 2) identification of a point  on
   a loop with a point on another  edge. It remains to construct
   all graphs which can be obtained  from the circle by   1, 2, 3 or 4 such
   operations and remove all duplicates and  graphs with $\geq 3$   loops.
\end{proof}

\begin{definition} Suppose that   a projection   $G$ and a disk $D$ in $T$
 are chosen so that
   $G\cap D$ consists of   two disjoint
proper arcs  $l_1,l_2\subset D$. Let $l'_1,l'_2 \subset D$ be two
new arcs such that they have the same endpoints and $l_1' \cap
l_2'$ consists of two transverse points.  Then {\em the biangle
  addition} consists in replacing
 $l_1,l_2 \subset D$ by $l'_1,l'_2 \subset D$.
\end{definition}

For  performing this operation it suffices to choose a simple arc
$\alpha\subset T$   connecting  two non-vertex points  of $G$ and
a regular  neighborhood $D\subset T$ of $\alpha$, see
Fig.~\ref{fig3}. The inverse operation   is called {\em the
biangle removal}.
\begin{figure}[!!!!!!!ht]
\centerline{\psfig{figure=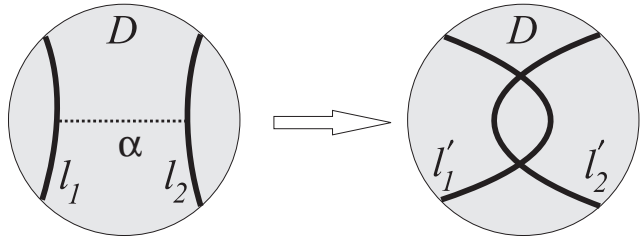,height=3.1cm}}
  \caption{Biangle addition }
  \label{fig3}
\end{figure}

Let $G\subset T$ be a projection. By analogy with knot diagrams we
will say that $G$ is local if it is contained in a disk in $T$,
and composite if there is a disk $D\subset T$ such that neither
$D\cap G$ nor $(T\setminus{\rm Int }D )\cap G$ are trivial arcs.
In particular, prime projections have no trivial loops.

\begin{theorem} There exist exactly 36 different prime projections
  in $T$ with $\leq 4$ crossings, see Fig.~\ref{fig4}. All these projections are minimal.\\
\end{theorem}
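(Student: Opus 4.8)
The plan is to build all prime projections systematically from the 15 regular graphs classified in Lemma 2, using the biangle addition operation as the main constructive tool, and then to prove completeness and minimality.
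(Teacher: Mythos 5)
Your one-sentence plan points in the right general direction --- the paper's proof does indeed start from the 15 abstract graphs of Lemma~2 and enumerates their embeddings into $T$ --- but as written it is an outline, not a proof, and it contains no argument for any of the three things the theorem actually asserts: that the list is complete, that it has exactly 36 members (i.e.\ that the listed projections are pairwise inequivalent), and that every one of them is minimal. The entire substance of the paper's proof is the case-by-case embedding analysis that your plan omits: for each abstract graph one must classify the essentially different ways its circles can sit in $T$ (trivial versus nontrivial, same side versus opposite sides of another circle), use the fact that the complement of a pair of intersecting nontrivial circles is a disk to pin down the remaining pieces, and show that the graphs $f, i, j, k$ admit \emph{no} prime projections at all because any loop attached to the projection $\bf 2_3$ is forced to be trivial. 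None of this is a routine consequence of Lemma~2; it is where the work lives.

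You also overstate the role of biangle addition. In the paper it is the constructive tool only for the single graph type $o$ (the 4-vertex graph with two double edges), where a biangle face can be collapsed to reduce to a 2-vertex projection; for every other graph type the argument is different (cutting at vertices into circles, counting trivial loops, etc.), and for type $o$ one must separately handle the subcase with no biangle face. Finally, two further steps are indispensable and absent from your plan: a proof that the 36 projections are pairwise distinct (the paper compares the multisets of face degrees, with a separate ad hoc argument to separate $\bf 4_{11}$ from $\bf 4_{21}$), and the minimality claim, which cannot be established at this stage at all --- the paper defers it to the end of the proof of Theorem~1, where the generalized Kauffman polynomial shows all resulting knots are distinct, and only then does minimality of the projections follow. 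A completed write-up would need all of these ingredients.
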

\begin{figure}[htb!!!!!!!!!!!!]
\centerline{\psfig{figure=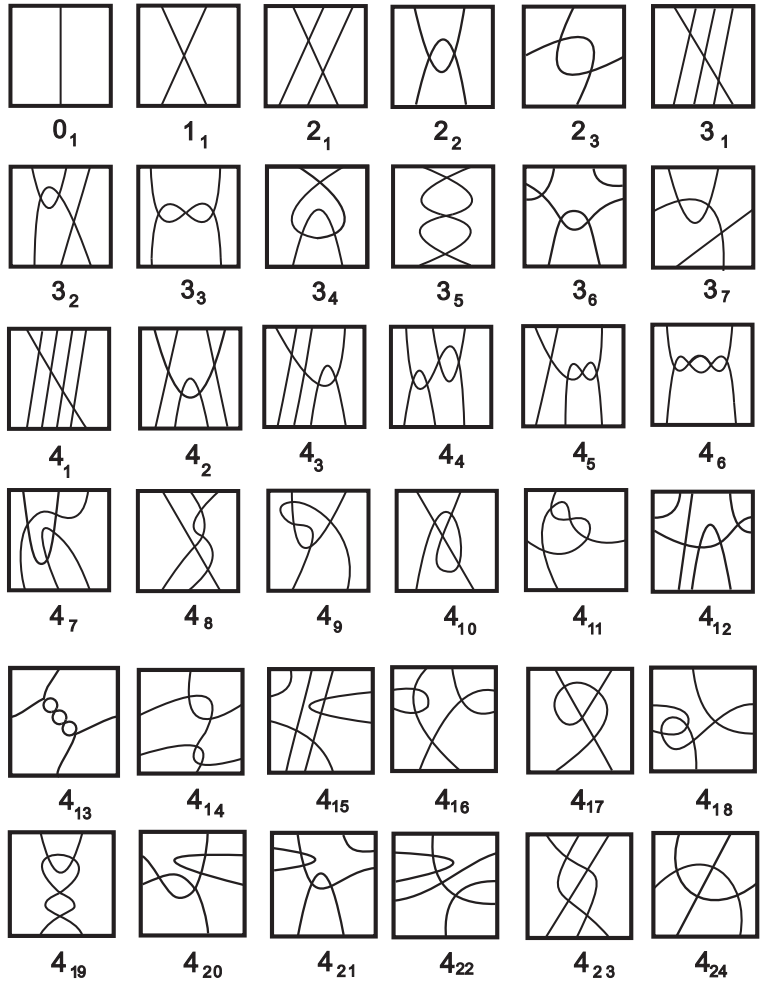,height=13cm}}
  \caption{Projections in $T$, which is represented as a square with
  identified opposite sides}
 \label{fig4}
\end{figure}

 \begin{proof}
   It is easy to see that any  projection without vertices
    (i.e. any nontrivial circle in $T$)
   is equivalent to the projection $\bf 0_1$ in Fig.~\ref{fig4}.
     Let $G$ be a prime projection
corresponding to one of the graphs $b$, $c$,  $e$, and $h$. We
   cut   $G$ at all vertices so as to produce   $n+1$
 disjoint circles, where
$n$ is the number of  vertices  of $G$. Circles corresponding to
the loops of $G$ are nontrivial. It follows that they are parallel
in $T$. Each of the remaining circles may be
  either trivial or not. We list  all
combinations of their types and obtain  projections  $ \bf 1_1$,
$\bf 2_1$ -- $\bf 2_2$,   $\bf 3_1$ -- $\bf 3_3$, $\bf 4_{1}$ --
$\bf 4_{6}$ for $n=1,2,3,4$, respectively.

Let us consider a prime projection $G$ of type  $d$. It is the
union of two circles with two common  points. With respect to the
way how $G$ lies in $T$, exactly one of these points is
transverse, since otherwise we would have a link of two
components.   Let us cut $G$ at the  non-transverse point so as to
get two circles $\mu, \lambda$ with one transverse crossing point.
The complement to the circles in $T$ is a disk. It follows that
there is only one way to perform the inverse operation, i.e.
identification of a point of $\mu$ with a  point of $\lambda$. The
identification produces a projection equivalent to $\bf 2_{3}$.

Let us prove that there are no prime projections corresponding to
the graphs $f,  i, j, k$. Indeed, every such projection $G$ can be
obtained from the  projection $\bf 2_{3}$ by attaching one or two
loops. The complement to   $\bf 2_{3}$ in $T$ consists of two
disks. It follows that the loops   are trivial, in contradiction
with our assumption that $G$ is prime.

 Let us enumerate projections corresponding to the graph $g$,
 which consists of
   three circles such that every circle has one common point with each of
   the two other circles. Suppose that the common point of two circles
   (denote them $\mu, \lambda$)
     is
   transverse. As before, the  complement  to $\mu\cup  \lambda$ in $T$ consists of two disks.
   Therefore there is  only one way
to place the third circle into $T$.  We get the projection $\bf
3_{7}$.

Suppose that in all three common points of  the circles the
intersection is non-transverse.   Each circle can be either
trivial or not. Investigation of   all possibilities  shows that
one can get a projection of a knot  only in the case when the
number of trivial circles is odd (3 or 1). Moreover, in the  first
case
  $G$ must be contained in an annulus in $T$ having connected complement.
   This gives the  projection $\bf 3_5$. In the
second case we get   projections $\bf 3_4$ and $\bf 3_6$,
depending on   how a nontrivial circle and the trivial one are
approaching  to the second nontrivial circle: from the same side
or not.

Let us list all projections of type $l$. Each of them can be
obtained from some projection $G$ of type $g$
    by   inserting  a loop, which must be nontrivial. Suppose that
    $G$ is prime. Hence it  is one of the projections $\bf 3_4$, $\bf 3_5$.
  Since the complements to  $\bf 3_{6}$
and $\bf 3_7$ consist of disks,   one cannot insert a nontrivial
loop. There is only one way to add a nontrivial loop to $\bf
3_{4}$ as well as to  $\bf  3_5$. Doing so we get
  projections $\bf 4_7$ and $\bf 4_8$.

  Suppose that $G$ is not prime. Then it is either the standard projection of a local trefoil,
  or a projection composed from a local trefoil and  a nontrivial circle
   embedded in $T$. In the first case inserting a loop gives us a
   composite projection, in the second one we get
two projections $\bf 4_{9}$  and $\bf 4_{10}$ of type $l$.

We construct   projections of type $m$ in the same way as the
projections of type $g$. Any projection of type $m$  consists of
four circles such that each circle has two common point with the
other three circles.  Suppose that at least one of the common
point of two circles
   (denote them $\mu, \lambda$)     is   transverse.  As before, the  complement
    to $\mu\cup  \lambda$ in $T$ is a disk.   Therefore there is  only one way
to place   two other circles into $T$.  We get the projection $\bf
4_{11}$.

Suppose that at all four common points of  the circles the
intersection is non-transverse.   Each circle can be either
trivial or not. As in the case of  graph $g$, investigation of all
possibilities shows that one can get a projection of a knot  only
in the case when the number of trivial circles is odd (1 or 3). We
get the projection $\bf 4_{12}$ in the first case  and the
projection $\bf 4_{13}$ in the second one.

Note that any  projection $G$ of type $n$ contains two triple
edges.
 Then the
projection $G'\subset T$ obtained from $G$ by removing two edges
$e_1, e_2$ from one of those triple edges is of type $d$ and thus
is equivalent to the projection $\bf 2_3$. The complement to $G'$
in $T$ consists of two disks. In order to restore $G$   we should
add
  $e_1,e_2$ to $G'$ so that they   have common endpoints
on an edge $e$   of $G'$. One of those two edges must approach to
$e$ from the same side while the other  from the different sides.
Otherwise we obtain a   projection of a link. Therefore there is
only one way to add $e_1,e_2$ to $G'$, which gives   the
projection $\bf 4_{14}$.

Now we consider a projection $G$ of the last type $o$. Suppose
that one of the faces of  $G$ is a biangle. Let us  remove this
biangle by the operation in Fig.~\ref{fig3}, see Definition 2. We
get the projection which has  two vertices and thus corresponds
  to one of the graphs $c$ or $d$. It follows
that one can get $G$ from a projection of type $c$ or $d$ by the
biangle addition. The idea of the next part of the proof is to
list all projections of types $c$ and $d$ and to look along which
arcs one  can add biangles to them   so as to get prime
projections of type $o$. In the case of the type $c$ projection we
may consider only   arcs connecting loops.

All projections of types $c$ and $d$ with   arcs producing prime
projections
 of type $o$ are represented in Fig.~\ref{fig5}.
In the cases 1 -- 6 we get the projections $\bf 4_{15}-4_{20}$. In
cases 7 -- 10 we get two new projections of type $o$: $\bf 4_{21}$
in
  case 7 and $\bf 4_{22}$ in   case 8. Cases 9, 10 give
projections $\bf 4_{19}$ and $\bf 4_{20}$ obtained earlier.

\begin{figure}[htb]
\centerline{\psfig{figure=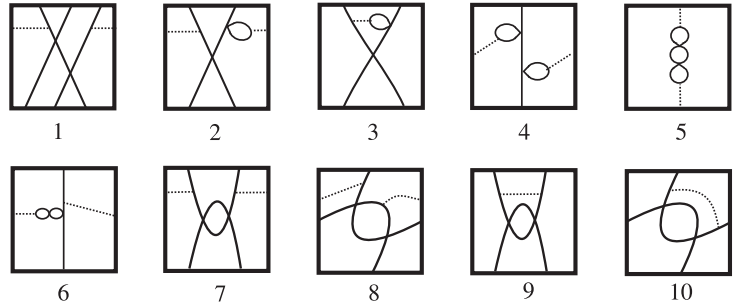,height=4cm}}
  \caption{How one can obtain all projections of type $o$}
  \label{fig5}
 \end{figure}

Now suppose that $G$ has  no biangle faces. Then its double edges
define disjoint nontrivial circles in $T$. Each of the  circles
contains two vertices  and their union decomposes  $T$ into two
annuli. The remaining four edges must be contained in these annuli
($m$ in the first annulus, $n$ in the second one) and connect
every vertex of the first circle with every vertex of the second
one. There are three possibilities: $(m,n)=(0,4),(1,3)$ and
$(2,2)$. We get the projections $\bf 4_{23}$, $\bf 4_{24}$ in the
first and   second cases, and a link projection in the third one.

Let us prove that all projections in Fig.~\ref{fig4} are
different. To this end we count the angles in each disc or annular
face of the projection.
    It turns out that the  sets of numbers thus obtained
    (with specifying the numbers corresponding to annular faces)
    are different for  all projections in Fig.~\ref{fig4},
except  projections $\bf 4_{11}$, $\bf 4_{21}$. These projections
have the same   set $ \{2, 2, 4, 8\}$ and  are different since the
biangle faces have a common vertex in the first case and no common
vertices  in the second one. Also all 36 projections are minimal,
see the last sentence of the proof of Theorem 1.
\end{proof}

\textbf{
\center{\S 3.    Proof of Theorem 1\\[5pt]}}

 We recover all prime knots having   projections with $n \leq
4 $ crossing points  (see Theorem 2 and Fig.~\ref{fig5}) by
indicating the types of  crossings  using  all $2^n$ possible
ways.  However one can essentially reduce this procedure by using
the following ideas.

 \begin{enumerate}
 \item The
simultaneous crossing changes in all crossings converts any
diagram to an equivalent one.   Therefore for any projection with
   $n$ vertices it suffices to consider $2^{n-1}$ possibilities.

 \item  Let $x,y$ be the vertices of a biangle face of a given projection.
 Then there are only two  ways of indicating the crossing
 types at $x,y$
  such that one  cannot remove the biangle by the second
 Reidemeister move. Therefore every biangle reduces the procedure   in two times.

 \item There are only two ways to indicate the under- and over-crossings
 for the fragment shown  in the center of Fig.~\ref{fig6}. All other
  ways give us non-minimal diagrams.

\item Let a given projection contain a triangle  face. Suppose
that the types of its  vertices    are chosen  so that we can
perform the third Reidemeister move and get a diagram
constructed earlier.
 Then one  may drop this choice.

\end{enumerate}

 \begin{figure}[htb]
\centerline{\psfig{figure=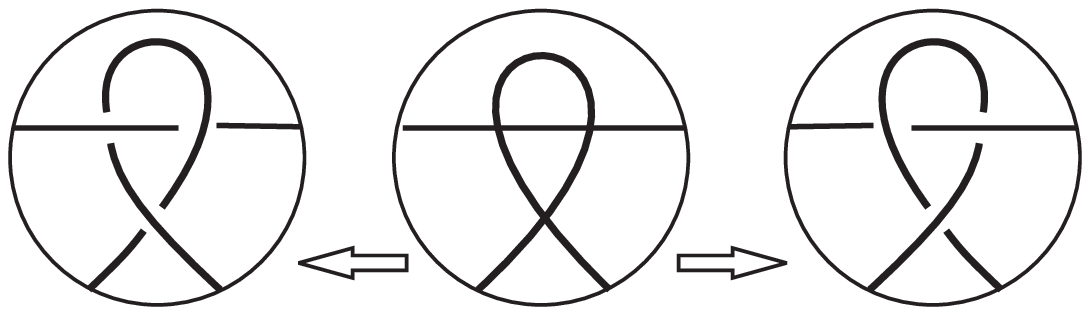,height=2cm}}
  \caption{ }
  \label{fig6}
 \end{figure}

Those ideas are enough for removal almost all duplicates. The only
exception is the diagram in Fig.~\ref{fig7} (left). It can be
transformed  into the diagram $\bf 4_{31}$ (right) we have
constructed before.
\begin{figure}[htb]
\centerline{\psfig{figure=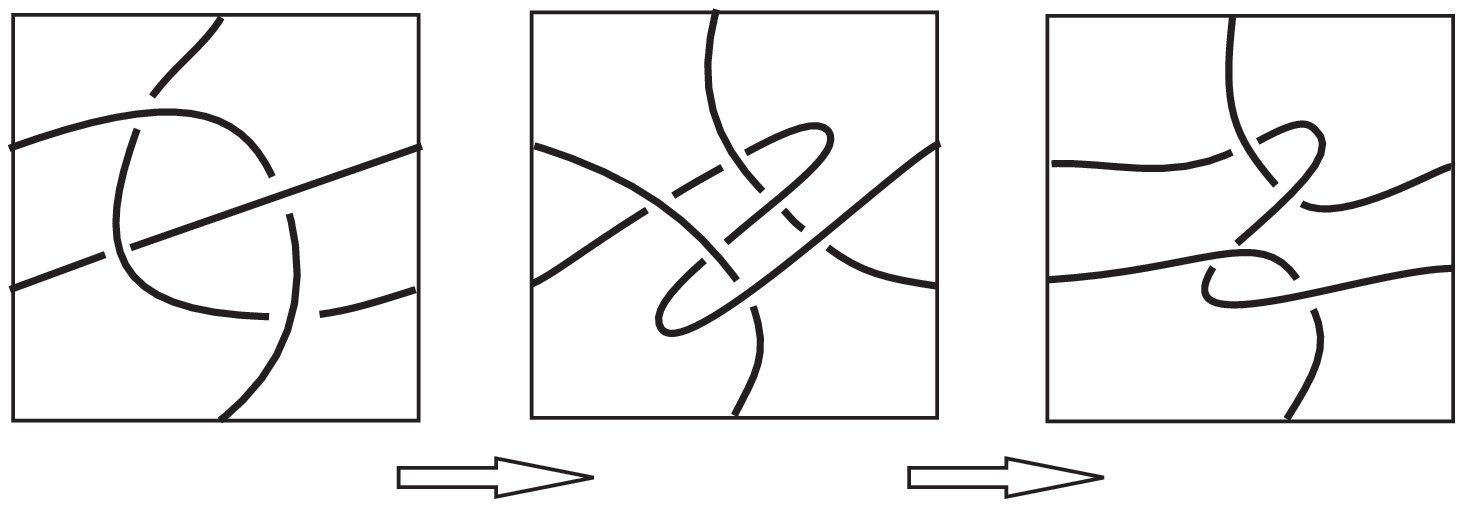,height=3cm}}
  \caption{ }
  \label{fig7}
 \end{figure}

All knots in  Fig.~\ref{fig1} are distinct. This can be   proved
by using the  generalized Kauffman polynomial, which
 is slightly  different from the usual normalized Kauffman bracket  [9, 10].
  We use two variables $a$ and $x$. These two variables are
needed for taking into account the number and the types (trivial
or not) of the circles in
 $T$ which we   obtain  after resolving all crossings.
 In addition we use a
non-standard normalization: we set the  polynomial of the trivial
knot to be
 $(-a^2-a^{-2})$,   not 1 as   usual. This is because quite often
 resolving all crossings produce
 no trivial circles. In those cases we would be forced to
 divide the polynomials we have obtained by $(-a^2-a^{-2})$, which is inconvenient.
   The exact formula is the following:
$$X(K)=(-a)^{-3w(K)}\sum_sa^{\alpha(s)-\beta(s)}(-a^2-a^{-2})^{\gamma(s)}x^{\delta(s)},$$
where $\alpha(s)$ and $\beta(s)$ are the numbers of markers $A$
and $B$ in a given state  $s$,  and $\gamma(s)$, $\delta(s)$ are
the numbers of trivial and nontrivial circles in $T$ obtained by
resolving all crossing points. Just as for the original Kauffman
polynomial,  the sum is taking over all   states. Of course,
$w(K)$ is the writhe of the diagram.   The next table was obtained
by direct
 calculations. One can see that all  polynomials are distinct.
Therefore all knots in Fig. 1 are also distinct.   It follows
that  all 36 projections in Fig.~\ref{fig2} are
minimal.$\endproof$

\vspace{0,3cm}

 \noindent
$0_{1}$: $x$\\[3pt]
$1_{1}$: $-(x^2a^{-4}-a^{-4}-1)$\\[3pt]
$2_{1}$: $x^3a^{-8}+x(-2a^{-8}-a^{-4})$\\[3pt]
$2_{2}$: $x^3+x(-a^{-4}-1-a^{4})$\\[3pt]
$2_{3}$: $x^2(a^{-6}-a^{-10})-a^{-6}-a^{-2}$\\[3pt]
$2_{4}$: $x(a^{-8}+a^{-6}-a^{-2})$\\[3pt]
$3_{1}$: $-(x^4a^{-12}+x^2(-a^{-8}-3a^{-12})+a^{-8}+a^{-12})$\\[3pt]
$3_{2}$: $-(x^4a^{-4}+x^2(-a^{-8}-2a^{-4}-1)+a^{-8}+a^{-4})$\\[3pt]
$3_{3}$: $-(x^4a^{-4}+x^2(-2-a^{-4}-a^{-8})+1+a^{4})$\\[3pt]
$3_{4}$: $-(x^3(a^{-10}-a^{-14})+x(-a^{-10}-a^{-6}+a^{-14}))$\\[3pt]
$3_{5}$: $-(x^3(a^{-2}-a^{-6})+x(a^{-10}-a^{-2}-a^{2}))$\\[3pt]
$3_{6}$: $-(x^2(a^{-16}+a^{-8}-a^{-12})-a^{-8}-a^{-4})$\\[3pt]
$3_{7}$: $-(x^2(a^{-8}-a^{-4})+a^{-4}+a^{-16})$\\[3pt]
$3_{8}$: $-(x^2a^{-12}-a^{-12}-1)$\\[3pt]
$3_{9}$: $-(x^2(a^{-8}-a^{-4}+a^{-12})-a^{-8}-2a^{-12}+a^{-4})$\\[3pt]
$3_{10}$: $-(x^2(2a^{-4}-1)-2a^{-4}-a^{-8}+a^{4})$\\[3pt]
$3_{11}$: $-x(-a^{-6}-a^{2}-a^{-8}+a^{-4}+a^{-2})$\\[3pt]
$4_{1}$: $x^5+x^3(-1-2a^{4}-2a^{-4})+x(1+a^{8}+a^{4}+a^{-4}+a^{-8})$\\[3pt]
$4_{2}$: $x^5a^{-16}-x^3(a^{-10}+4a^{-16})+x(2a^{-12}+3a^{-16})$\\[3pt]
$4_{3}$: $x^5a^{-8}-x^3(a^{-12}+3a^{-8}+a^{-4})+x(a^{-4}+2a^{-12}+2a^{-8})$\\[3pt]
$4_{4}$: $x^5a^{-8}+x^3(-2a^{-4}-2a^{-8}-a^{-12})+x(a^{-12}+2a^{-4}+a^{-8}+1)$\\[3pt]
$4_{5}$: $x^5+x^3(-2a^{4}-2-a^{-4})+x(a^{8}+2a^{4}+2)$\\[3pt]
$4_{6}$: $x^5-x^3(a^{4}+3+a^{-4})+x(a^{4}+3+a^{-4})$\\[3pt]
$4_{7}$: $x^4(-a^{-2}+a^{2})-x^2(a^{6}-2a^{-6}+a^{2})-a^{-10}-a^{-6}$\\[3pt]
$4_{8}$: $x^4(a^{-14}-a^{-18})-x^2(a^{-10}-2a^{-18}+a^{-14})-a^{-18}-a^{-14}$\\[3pt]
$4_{9}$: $x^4(-a^{-10}+a^{-6})+x^2(-a^{-2}+a^{-14}+a^{-10}-a^{-6})-a^{-14}-a^{-10}$\\[3pt]
$4_{10}$: $x^4(-a^{-18}+a^{-14})+x^2(-a^{-10}+2a^{-18}-2a^{-14})+a^{-10}+a^{-14}$\\[3pt]
$4_{11}$: $x^4(-a^{-2}+a^{2})+x^2(-a^{6}+a^{-6}+a^{-2}-2a^{2})+a^{6}+a^{2}$\\[3pt]
$4_{12}$: $x^4(a^{-6}-a^{-10})+x^2(-a^{-6}+a^{-14}-a^{-2})+a^{-6}+a^{-10}$\\[3pt]
$4_{13}$: $x^4(a^{-6}-a^{-10})+x^2(a^{-14}-2a^{-4})+a^{-2}+a^{2}$\\[3pt]
$4_{14}$: $x^3(2-a^{-4}-a^4)+x(-1+a^{8}+a^{-8})$\\[3pt]
$4_{15}$: $x^3(a^{-12}-2a^{-16}+a^{-20})+x(-a^{-8}+2a^{-16})$\\[3pt]
$4_{16}$: $x^3(a^{-12}-a^{-16}+a^{-20})+x(-a^{-8}-a^{-12}-a^{-20})$\\[3pt]
$4_{17}$: $x^3(a^{-4}-a^{-8}+a^{-12})-x(a^{-10}+a^{-4}+a^{-16})$\\[3pt]
$4_{18}$: $x^2(a^{-10}-a^{-14}+a^{-18}-a^{-22})-a^{-6}-a^{-10}$\\[3pt]
$4_{19}$: $x^3(a^{-12}-a^{-8})+x(-a^{-16}-a^{-20}+a^{-4})$\\[3pt]
$4_{20}$: $x^3(a^{-4}-1)+x(2-a^{-4})$\\[3pt]
$4_{21}$: $x^3a^{-16}-x(2a^{-16}+a^{-8})$\\[3pt]
$4_{22}$: $x^3a^{-8}-x(a^{-12}+a^{-8}-1+a^{4}+a^{-4})$\\[3pt]
$4_{23}$: $x^2(a^{-14}-a^{-30}-a^{-10}+a^{-6})-a^{-2}-a^{-24}$\\[3pt]
$4_{24}$: $x^2(a^{-6}+a^{2}-2a^{-2}-a^{-10})+a^{-14}+a^{-2}$\\[3pt]
$4_{25}$: $x(a^{-20}+a^{-18}-a^{-16}-a^{-14}+a^{-12}+a^{-10}-a^{-6})$\\[3pt]
$4_{26}$: $x^3(a^{-10}-a^{-6})+x(2a^{-6}-2a^{-10}+a^{-16})$\\[3pt]
$4_{27}$: $x^3(a^{-6}-a^{-2})+x(a^{2}+a^{-8}-a^{-10})$\\[3pt]
$4_{28}$: $x^3(a^{-2}-a^{2})+x(a^{6}+a^{2}+1-a^{-2}-a^{-6})$\\[3pt]
$4_{29}$: $x(a^{-16}-a^{-2}+a^{-6}+a^{-14}-a^{-38})$\\[3pt]
$4_{30}$: $x(a^{-4}+a^{-6}+a^{-16}-2a^{-8}-2a^{-10}+2a^{-14})$\\[3pt]
$4_{31}$: $x(a^{-2}-a^{6}-a^{4}+a^{2}+3-a^{-4}-a^{-6})$\\[3pt]
$4_{32}$: $x^3a^{-8}-x(a^{-2}+a^{-4}-a^{-6}+a^{-8}+a^{-12})$\\[3pt]
$4_{33}$: $x^3+x(a^{8}-2a^{4}-a^{2}-1-a^{-2}-a^{-4})$\\[3pt]
$4_{34}$: $x^3a^{-16}+x(-a^{-6}-a^{-8}+a^{-10}+a^{-12}-3a^{-16})$\\[3pt]
$4_{35}$: $x^2(-a^{-6}-a^6+a^{2})+a^{6}+2a^{-6}-a^{2}$\\[3pt]
$4_{36}$: $x^2(2a^{-6}-2a^{-10}-a^{-2})-a^{-6}+a^{-14}+a^{-10}+a^{2}$\\[3pt]
$4_{37}$: $x^2(-a^{-6}-a^{2}+a^{-2})+a^{-6}-a^{10}+a^{6}+a^{2}$\\[3pt]
$4_{38}$: $x(-a^{-6}-a^{-8}+a^{-10}+2a^{-12}-a^{-16}+a^{-20})$\\[3pt]
$4_{39}$: $x^2(a^{2}-a^{6})-a^{6}+a^{2}+a^{-2}+a^{-10}$\\[3pt]
$4_{40}$: $x(-a^{-18}-a^{-16}+a^{-14}+2a^{-12}+a^{-4}-a^{-8})$\\[3pt]
$4_{41}$: $x(a^{8}-2a^{4}-a^{2}+1+2a^{-2})$\\[3pt]
$4_{42}$: $xa^{-12}-a^{-2}+a^{-4}-a^{-6}-a^{-8}-a^{-10}$\\[3pt]
$4_{43}$: $x(2a^{-6}-a^{-2}+a^{-8}-a^{-12})$\\[3pt]
$4_{44}$: $x(-a^{-6}-a^{-8}+a^{-10}+2a^{-12}+a^{-14}-a^{-16})$\\[3pt]
$4_{45}$: $x^2(2a^{-14}-2a^{-10})-a^{-2}+a^{-6}+a^{-10}-2a^{-14}-a^{-18}$\\[3pt]
$4_{46}$: $x^2(-a^{6}+a^{2}+a^{-2}-a^{-4})+a^{6}-2a^{2}-2a^{-2}+a^{-6}$\\[3pt]
$4_{47}$: $x^3+x(-1+a^{8}-2a^{4}-2a^{-4}+a^{-8})$\\[3pt]

\center{\textbf{\S 4. Final remarks \\[5pt]}}
 \begin{enumerate}
 \item The degrees of  $x$ in every polynomial have the same parity.
  This parity depends only on the type (trivial or nontrivial) of the element $[K]$ of
   $H_1(T;\mathbb{Z}_2)$ corresponding to the given knot $K$. One
   can easily see that $[K]=0$ if and only if any diagram of $K$
   crosses each side of the square in an even number of points.

\item  The table contains exactly 10 homologically trivial knots:
$2_3$, $3_7$, $4_7 - 4_9, 4_{18}, 4_{23}, 4_{39}, 4_{45}, 4_{46}$.

  \item The table contains exactly 23 alternating diagrams:
 $1_1$, $2_2, 2_3$, $3_3, 3_5-3_8, 3_{10}$,
$4_1, 4_7, 4_{13}, 4_{14}, 4_{17}-4_{19}, 4_{22}-4_{24}, 4_{36},
4_{37}, 4_{39},  4_{45}$. As it should be, each of the corresponding knots has exactly one minimal diagram.

\item About a half of 36 projections from Theorem 2 determine
 only one  knot each. The projection $\bf 4_1$ determine  the maximal number
of knots (6). The average number of knots for one projection is
about 1,8.
\end{enumerate}

{\bf References}
\begin{enumerate}
\item    \textit{Tait, P. G.} On knots I, II, III // Cambridge
University Press. 1898 -- 1900. Including Trans. Roy. Soc.
Edinburgh. -- 1877. -- 28.-- P. 35 -- 79.

 \item  \textit{Alexander, J.W.} On types of knotted curves //
Ann. Math. -- 1927. -- 28. -- P. 562 -- 586.

\item   \textit{Rolfsen, D.}  Knots and Links // Mathematics
Lecture. Publish or Perish, Inc., Berkeley, Calif.-- 1976. --
7.-- ix+439 pp.

\item   \textit{Hoste, J., Thistlethwaite, M., Weeks, J. }
 The first 1,701,935
knots // Math. Intelligencer (Springer). --1998. -- Vol 20, №4. --
P. 33–48.

\item   \textit{Hoste, J.}  The enumeration and classification of
knots and links //    Handbook of Knot Theory, Amsterdam,
Elsevier. -- 2005.- 32 pp.

\item   \textit{Drobotukhina, Yu.V.}  Jones polynomial analog for
links in $RP^3$ and the generalization of Kauffman -- Murasugi
theorem // Algebra and analysis. -- 1991. -- Vol. 2, №3. -- P. 613
-- 630

\item    \textit{Drobotukhina, Yu.V.}   Classification of links in
$ \mathbb{R}P^3$ with at most six crossings // Advances in Soviet
Mathematics. -- 1994. -- Vol. 18, № 1. -- P. 87 -- 121.

\item   \textit{Bogdanov, A.,  Meshkov, V.,  Omelchenko, A.,
Petrov, M.} Enumerating the k-tangle projections // J. of Knot
Theory and its Ramifications.-- 2012. -- Vol. 21, № 7, 17 pp.

\item \textit{Grishanov, S.,  Meshkov, V.,  Omelchenko, A.}
Kauffman-type polynomial invariants for doubly periodic structures
// J. of Knot Theory and its Ramifications. -- 2007. -- Vol. 16,
№ 6. -- P. 779 -- 788.

\item \textit{Grishanov, S., Meshkov, V., Vassiliev, V.}
Recognizing textile structures by finite type knot invariants //
J. of Knot Theory and its Ramifications, 18:2 (2009), P. 209
–-235.

\item \textit{Grishanov, S., Vassiliev, V.}  Fiedler type
combinatorial formulas for generalized Fiedler type invariants of
knots in $M^2\times  R^1$ // Topology and its Applications, Vol.
156 (2009), P. 2307–2316.

\item     \textit{Kauffman, L.} State models and the Jones
polynomial // Topology. -- 1987. -- Vol. 26, №3. -- P. 395 -- 407.

 \item   \textit{Prasolov, V.V.,  Sossinsky, A.B.} Knots, links, braids and 3-manifolds //
    -- M.:
 1997. -- 352 pp.
\end{enumerate}

\vspace{ 1cm}

  Akimova A. A.  \hspace*{3cm}  Matveev S. V.

  \vspace{0,1cm}

   South Ural State University \hspace*{1cm} Chelyabinsk State University

  \hspace*{ 6cm}and IMM of Ural branch of RAS

\vspace{0,1cm}

 akimova\_susu@mail.ru \hspace*{2,5cm}
matveev@csu.ru

\flushleft{}

\end{document}